\newtheorem{thm}{Theorem}
\newtheorem{lem}[thm]{Lemma}
\newtheorem{cor}[thm]{Corollary}
\theoremstyle{definition}
\newtheorem{defn}[thm]{Definition}
\newtheorem{example}[thm]{Example}
\theoremstyle{remark}
\newtheorem{rmk}[thm]{Remark}
\author{Valent\'{\i}n Mendoza}
\address{Departamento de Matem\'atica, Universidade Federal de Vi\c cosa, MG, Brazil.}
\email{valentin@ufv.br}
\title{Renormalization and forcing of Horseshoe Orbits}
\subjclass[2010]{Primary 37E30, 37E15, 37B10. }
\keywords{Boyland forcing, horseshoe map, renormalization.}
\date{May 20, 2014.}
\begin{document}
\begin{abstract}
In this paper we deal with the Boyland order of horseshoe orbits. We prove 
that there exists a set $\mathcal{R}$ of renormalizable horseshoe orbits 
containing only quasi-one-dimensional orbits, that is, for these orbits 
the Boyland order coincides with the unimodal order.
\end{abstract}

\maketitle
\bibliographystyle{amsalpha}

\section{Introduction}
In  \cite{Boy}, Boyland introduced the forcing relation between periodic orbits 
of the disk $D^2$. Given two periodic orbits $P$ and $R$, we say that
 $P$ \textit{forces} $R$, denoted by $P\geqslant_2R$, if  every homeomorphism
  of $D^2$ containing the braid  type of $P$ must contain the braid type of $R$.
   The set of periodic orbits forced by $P$
 is denoted by $\Sigma_P$. In this paper we are concerned with the
 forcing of Smale horseshoe periodic orbits. A horseshoe orbit $P$ is called 
 \textit{quasi-one-dimensional} if $P$ forces all orbit $R$ such that $P\geqslant_1R$,
 where $\geqslant_1$ is the unimodal order. In \cite{Hall}, Hall gave a set of 
quasi-one-dimensional horseshoe orbits, called NBT orbits (Non-Bogus 
Transition orbits) which are in bijection with $\mathbb{Q}\cap(0,\frac{1}{2})$
 and have the property that their thick interval map induced has minimal periodic 
 orbit structure, that is, if $P$ is an NBT orbit then every braid type of a
  periodic orbit of its thick interval map $\theta_P$ is forced by the braid type
   of $P$.

In this paper we obtain a type of orbits which are quasi-one-dimensional too
 although  their associated thick interval maps are reducible in the sense of 
Thurston \cite{Thu}, that is, they are isotopic to reducible homeomorphisms which 
have an invariant set of non-homotopically trivial disjoint curves $\{C_1,\cdots,C_n\}$.
Restricted to the components of $D^2\setminus\{C_1,\cdots,C_n\}$, these reducible maps
(or one of its power) have minimal periodic orbit structure.
\begin{thm}\label{maintheorem}
There exists a set $\mathcal{R}\supset\operatorname{NBT}$ of quasi-one-dimensional horseshoe orbits,
 that is, if $P\in\mathcal{R}$ then $\Sigma_P=\{R:P\geqslant_1R\}$.
\end{thm}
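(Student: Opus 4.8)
The plan is to construct $\mathcal{R}$ by closing the family $\operatorname{NBT}$ under a renormalization operation and then to prove quasi-one-dimensionality by induction on the number of renormalizations, taking Hall's theorem on $\operatorname{NBT}$ orbits as the base case. Concretely, the horseshoe combinatorics supply a substitution $\mathcal{U}$ on the binary itineraries that code horseshoe orbits, with the property that the thick interval map $\theta_{\mathcal{U}(Q)}$ carries an invariant family of disjoint, non-homotopically-trivial curves $\{C_1,\dots,C_n\}$; I would set $\mathcal{R}$ to be the smallest set containing $\operatorname{NBT}$ and closed under the relevant operators $\mathcal{U}$. The theorem then reduces to the inductive step: if $Q$ is quasi-one-dimensional and $P=\mathcal{U}(Q)$, then $P$ is quasi-one-dimensional.

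For the inductive step, first note that the inclusion $\Sigma_P\subseteq\{R:P\geqslant_1 R\}$ is automatic, since Boyland forcing implies unimodal forcing; moreover, because $\theta_P$ is a thick interval map, the braid types of its periodic orbits coincide with the unimodal set $\{R:P\geqslant_1 R\}$. Hence the whole content is to show that \emph{every} periodic orbit of $\theta_P$ is forced by $\bt(P)$, i.e. lies in $\Sigma_P$. For an $\operatorname{NBT}$ orbit this is immediate from Hall's minimality, but here $\theta_P$ is reducible and therefore not a globally tight representative, so the one-step argument fails and the reduction must be exploited.

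The key step is to decompose the periodic orbits of $\theta_P$ along the canonical reducing system $\{C_1,\dots,C_n\}$. An orbit either tracks the permutation of the curves or lies in $D^2\setminus\bigcup_i C_i$. On each component the first return of (a power of) the reducible representative is conjugate to the thick interval map of the core $Q$, and by hypothesis that piece realizes its minimal periodic orbit structure; thus the interior orbits are precisely the $\mathcal{U}$-images of orbits forced by $Q$, which by the inductive hypothesis $\Sigma_Q=\{R:Q\geqslant_1 R\}$ are forced and hence belong to $\Sigma_P$. The curve system itself, being the Nielsen--Thurston reduction, persists in every representative of $\bt(P)$, so the reducing orbit and the orbits tracking the permutation of the $C_i$ are forced as well. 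Feeding both families back through the commutation of $\mathcal{U}$ with the unimodal order matches the decomposition of $\{R:P\geqslant_1 R\}$ termwise, giving the reverse inclusion and hence $\Sigma_P=\{R:P\geqslant_1 R\}$.

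The main obstacle is precisely this decomposition: a reducible surface homeomorphism can fail to force orbits that a tight representative would, so I must rule out that $\theta_P$ is "too simple" and silently drops some unimodally forced orbit. This amounts to verifying two things exactly — that the first-return maps on the components inherit minimal orbit structure from $Q$, so no interior orbit is lost, and that the gluing along $\{C_1,\dots,C_n\}$ contributes exactly the permutation orbits and no spurious identifications — together with checking that the symbolic substitution $\mathcal{U}$ mirrors the geometric reduction of $\theta_P$ faithfully, so that the unimodal and Boyland decompositions line up with no leftover terms. Establishing this correspondence between the renormalization $\mathcal{U}$ and the Thurston reduction of $\theta_P$ is where the real work concentrates.
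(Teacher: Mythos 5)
Your plan is essentially the paper's: your substitution $\mathcal{U}$ is the $*$-product (renormalization) used there, your $\mathcal{R}$ is the closure of $\operatorname{NBT}$ under it, the reducing curves $\{C_1,\dots,C_n\}$ together with the identification of the first-return map on the inner disks with the thick map of the core are exactly how the paper computes $\Sigma_{P*Q}$, and your ``commutation of $\mathcal{U}$ with the unimodal order'' is the paper's symbolic verification that every $S$ with $P\leqslant_1 S\leqslant_1 P*Q$ has the form $P*R$ with $R\leqslant_1 Q$. The induction on the number of renormalization steps, with Hall's theorem as the base case, is likewise the paper's Corollary on iterated products.

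One point in your accounting falls short as written. You assign to the exterior of the reducing disks only ``the reducing orbit and the orbits tracking the permutation of the $C_i$,'' justified by persistence of the canonical reduction system, and later you say the gluing ``contributes exactly the permutation orbits.'' But the exterior component, after collapsing the disks to punctures, carries the full thick interval map of the outer $\operatorname{NBT}$ orbit $P'$ defining $\mathcal{U}$; that map has infinitely many periodic orbits, one for each $R$ with $P'\geqslant_1 R$, and all of these lie in the target set $\{R:\mathcal{U}(Q)\geqslant_1 R\}$. Persistence of the curve system alone only forces the permutation data; to force the whole exterior family you must identify the Thurston representative of $\theta_{\mathcal{U}(Q)}$ outside the disks with that of $\theta_{P'}$ and invoke Hall's minimality for $P'$ a second time --- this is precisely how the paper obtains $\{R:P'\geqslant_2 R\}\subset\Sigma_{P'*Q}$. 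Without that step the termwise match with the unimodal decomposition $\{R:P'\geqslant_1 R\}\cup\{P'*R:Q\geqslant_1 R\}$ leaves everything in the first set except the permutation orbit unaccounted for.
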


These orbits are defined using the renormalization operator which was introduced 
in \cite{ColEck} as the $*$-product.

\section{preliminaries}
\subsection{Boyland partial Order}
 Let $D_n$ be the punctured disk. Let $\operatorname{MCG}(D_n)$ be the 
 group of isotopy classes of homeomorphisms of $D_n$, which is called the 
 \textit{mapping class group of $D_n$}. Given a homeomorphism $f:D^2\rightarrow D^2$
 of the disk $D^2$ with a periodic orbit $P$, the \textit{braid type}
 of $P$, denoted by $\operatorname{bt}(P,f)$, is defined as follows:
 Take an orientation preserving  homeomorphism $h:D^2\setminus P\rightarrow D_n$
 then  $\operatorname{bt}(P,f)$ is the conjugacy class 
 $[h\circ f\circ h^{-1}]\in \operatorname{MCG}(D_n)$ of $h\circ f\circ h^{-1}:D_n\rightarrow D_n$.
 
 Let $\operatorname{BT}$ be the union of all the periodic braid types  and let $\operatorname{bt}(f)$
  be the  set formed by the braid types  of the periodic orbits of $f$. We will say that
 $f:D^2\rightarrow D^2$ \textit{exhibits} a braid type  $\beta$ if there exists
  an $n$-periodic orbit $P$  for $f$ with $\beta=\operatorname{bt}(P,f)$. Now we
  can define the relation $\geqslant_2$ on $\operatorname{BT}$. We say that $\beta_1$ \textit{forces}
  $\beta_2$, denoted by $\beta_1\geqslant_2\beta_2$, if every homeomorphism
  exhibiting $\beta_1$, exhibits $\beta_2$ too. Then it is said that 
  a periodic orbit $P$ \textit{forces} another periodic orbit $R$,
  denoted by $P\geqslant_2 R$, if 
  $\operatorname{bt}(P)\geqslant_2\operatorname{bt}(R)$.
 
 In \cite{Boy}, P. Boyland proved the following theorem.
 \begin{thm}{\cite[Theorem 9.1]{Boy}}\label{thm:boy}
 The relation $\geqslant_2$ is a partial order.
 \end{thm}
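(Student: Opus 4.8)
The plan is to verify the three defining properties of a partial order for $\geqslant_2$: reflexivity, transitivity, and antisymmetry. The first two are essentially formal. For reflexivity, every homeomorphism exhibiting a braid type $\beta$ trivially exhibits $\beta$, so $\beta\geqslant_2\beta$. For transitivity, suppose $\beta_1\geqslant_2\beta_2$ and $\beta_2\geqslant_2\beta_3$; if a homeomorphism $f$ exhibits $\beta_1$ then it exhibits $\beta_2$ by the first relation, hence $\beta_3$ by the second, so $\beta_1\geqslant_2\beta_3$. Thus $\geqslant_2$ is automatically a preorder, and all the content lies in antisymmetry.

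For antisymmetry I would invoke the Nielsen--Thurston theory of surface homeomorphisms. To each braid type $\beta$ one associates its canonical (Thurston) representative $\Phi_\beta$ on the corresponding punctured disk, which minimizes topological entropy within its isotopy class. The key input I would use is the minimality/persistence property of the canonical form: its periodic orbits are exactly the unremovable ones, so that $\beta_1\geqslant_2\beta_2$ holds if and only if $\Phi_{\beta_1}$ itself exhibits $\beta_2$. Granting this characterization, I would extract two numerical consequences of mutual forcing $\beta_1\geqslant_2\beta_2$ and $\beta_2\geqslant_2\beta_1$. First, since $\Phi_{\beta_1}$ exhibits $\beta_2$ and conversely, monotonicity of the minimal entropy gives $h_{\mathrm{top}}(\Phi_{\beta_1})=h_{\mathrm{top}}(\Phi_{\beta_2})$. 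Second, because the forced braid types of any given period are realized precisely by the canonical form, the two canonical representatives must have the same number of periodic orbits of each period.

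The remaining and genuinely hard step is a rigidity argument upgrading these coincidences to $\beta_1=\beta_2$. Here I would split along the Thurston trichotomy. In the finite-order case the entropy vanishes and $\beta$ is determined by its rotational data, so equality is direct. In the reducible case I would argue on the canonical reduction system, which is itself an isotopy invariant, and reduce to the pieces. The essential case is pseudo-Anosov: there I would use that a pseudo-Anosov class is determined, up to conjugacy, by its stable and unstable measured foliations together with the transverse measure, and that the dilatation $e^{h_{\mathrm{top}}}$ combined with the requirement that each map exhibit the other's marked orbit at equal complexity forces these invariant foliations to coincide. One then identifies the two distinguished periodic orbits inside the common pseudo-Anosov model and concludes that $\beta_1$ and $\beta_2$ are the same conjugacy class in $\operatorname{MCG}(D_n)$. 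I expect this last rigidity step --- turning ``equal entropy and mutual exhibition'' into ``identical canonical form and matching marked orbit'' --- to be the main obstacle, since it is exactly where the soft counting estimates must be replaced by the hard uniqueness theorems of Thurston's classification.
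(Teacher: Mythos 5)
This statement is not proved in the paper at all: it is quoted verbatim from Boyland (\cite[Theorem 9.1]{Boy}) and used as an external input, so there is no internal proof to compare yours against. Judged on its own terms, your treatment of reflexivity and transitivity is correct and is indeed all that those two properties require; and you have correctly identified both where the real content lies (antisymmetry) and the right machinery (Thurston canonical forms together with the persistence/unremovability of their periodic orbits, which is exactly the engine behind Boyland's forcing theory and his characterization that $\beta_1\geqslant_2\beta_2$ iff the canonical representative of $\beta_1$ exhibits $\beta_2$).

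However, the antisymmetry argument as written has a genuine gap, and you concede as much: the ``rigidity step'' you defer is precisely the theorem. Worse, the numerical invariants you propose to extract from mutual forcing --- equality of minimal topological entropies and equality of the number of periodic orbits of each period --- cannot suffice to conclude $\beta_1=\beta_2$. A pseudo-Anosov braid type and its mirror image (conjugate by an orientation-reversing involution) have the same dilatation and the same count of periodic orbits of every period, yet are in general distinct elements of $\operatorname{MCG}(D_n)$; and unrelated pseudo-Anosov classes can share a dilatation. So ``equal entropy plus equal orbit counts'' does not pin down the invariant foliations, and the sentence claiming it does is where the proof would fail. What actually closes the argument in Boyland's treatment is the mutual \emph{exhibition} itself: from $\beta_1\geqslant_2\beta_2\geqslant_2\beta_1$ one places an orbit of type $\beta_2$ inside the canonical form of $\beta_1$ and vice versa, and then uses the uniqueness properties of canonical forms (not merely their entropies) to identify the two marked orbits and hence the two conjugacy classes. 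If you want a complete proof you must either carry out that identification or simply cite Boyland's Theorem 9.1, as the paper does.
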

\subsection{Smale horseshoe}
The Smale horseshoe is a map $F:D^2\rightarrow D^2$ of the disk which acts as
in Fig. \ref{figurahorseshoe}. The set $\Omega=\cap_{j\in\mathbb{Z}} F^j(V_0\cup V_1)$
is $F$-invariant and $F|_{\Omega}$ is conjugated to the shift $\sigma$ on the 
sequence space of two symbols $0$ and $1$, $\Sigma_2=\{0,1\}^\mathbb{Z}$, where
\begin{equation}
\sigma((s_i)_{i\in\mathbb{Z}})=(s_{i+1})_{i\in\mathbb{Z}}.
\end{equation}
The conjugacy 
$h:\Omega\rightarrow\Sigma_2$ is defined by
\begin{equation}
(h(x))_i=\left\{ \begin{array}{cc}
0 & \textrm{ if }F^i(x)\in V_0, \\
1 & \textrm{ if } F^i(x)\in V_1.\\
\end{array}
\right.
\end{equation}
\begin{figure}[h]
\centering
\includegraphics[width=55mm,height=36mm]{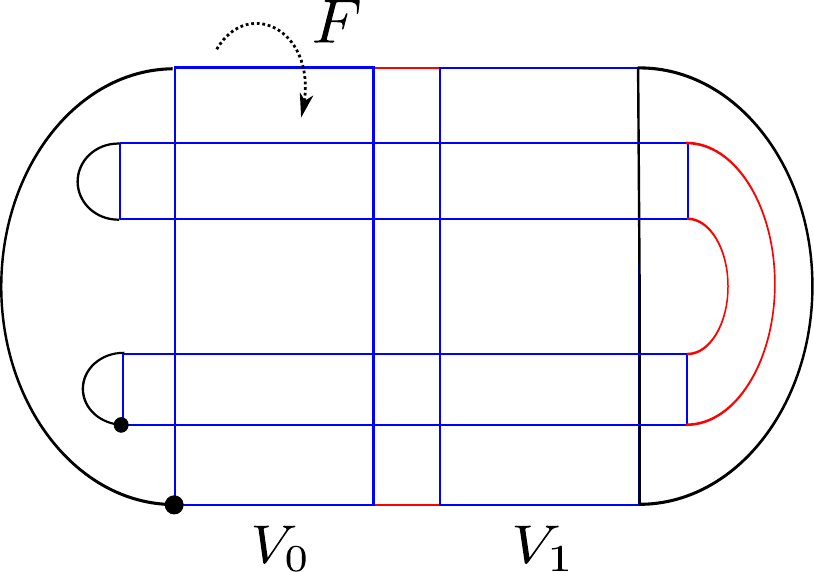}
\caption{Dynamics of $F$.}
\label{figurahorseshoe}
\end{figure}

To compare horseshoe orbits  it is necessary to define the \textit{unimodal order}.
 It is a total order in $\Sigma^+=\{0,1\}^\mathbb{N}$ given by the following rule:
 Let $s=s_0s_1...$ and $t=t_0t_1...$  be sequences in $\Sigma^+$ such that 
 $s_i=t_i$ for $i\le k$ and $s_{k+1}\neq t_{k+1}$, then $s<t$ if
 \begin{itemize}
 \item[(O1)] $\sum _{i=0}^{k} s_i$ is even and $s_{k+1}<t_{k+1}$, or
 \item[(O2)] $\sum _{i=0}^{k} s_i$ is odd and $s_{k+1}>t_{k+1}$.
 \end{itemize}
 We say that $s\geqslant_1 t$ if either $s=t$ or $s>t$.

Every $n$-periodic orbit $P\in\Omega$ of $F$ has a \textit{code} denoted by  $c_P\in\Sigma_2$. 
It is obtained from $h(p)=c_P^\infty$ where $p$ is a point of $P$  and $c_P$ satisfies
 $\sigma^i(c_P)\leqslant_1 c_P$, that is, $c_P$ is \textit{maximal} in the unimodal
  order $\geqslant_1$.
 We say that $P\geqslant_1 R$ if $\sigma^n(R)\leqslant_1 c_P$, $\forall n\ge1$.
 For every orbit $P$, there exists a homeomorphism $\theta_P$ that realizes the
 combinatorics of $P$. This is obtained fatting the line diagram of $P$ and 
 it is called the \textit{tick map induced by $P$}. See \cite{Hall}.
\subsection{Renormalized Horseshoe Orbits}

Let $P$ and $Q$ be two horseshoe periodic orbits with codes 
$c_P=A a_{n-1}$ where $A=a_0a_1\cdots a_{n-2}$ and $c_Q=b_0b_1\cdots b_{m-2} b_{m-1}$ 
with periods $n$ and $m$, respectively.
\begin{defn}[Renormalization Operator]
We will write $P*Q$ for the $nm$-periodic orbit with code
\begin{equation}
c_{P*Q}=\left\{ \begin{array}{cc}
Ab_0Ab_1Ab_2\cdots Ab_{m-2}Ab_{m-1} & \textrm{ if }\epsilon(A) \textrm{ is even} \\
A\overline{b_0}A\overline{b_1}A\overline{b_2}\cdots A\overline{b_{m-2}}A\overline{b_{m-1}} & \textrm{ if }\epsilon(A) \textrm{ is odd} \\
\end{array}
\right.
\end{equation}
where $\epsilon(A)=\sum_{i=0}^{n-2}a_i$ and $\overline{b_i}=1-b_i$.
\end{defn}
The orbit $P*Q$ is called the \textit{renormalization of  $P$ and $Q$}. 
If an orbit $S$ satisfies $S=P*Q$ for some $P,Q\in\Sigma_2$, it is said that $S$
is \textit{renormalizable}. Also we will denote
$$
P_1*P_2*\cdots*P_k=(\cdots((P_1*P_2)*P_3)\cdots).
$$

\begin{example}
If $P=101$ and $Q=1001$ then $P*Q=10\mathbf{0}10\mathbf{1}10\mathbf{1}10\mathbf{0}$.
\end{example}
\subsection{NBT Orbits}
There are a type of horseshoe orbits for which the Boyland partial order is well-understood.
They are constructed in the following way. Given a rational number
 $q=\frac{m}{n}\in\widehat{\mathbb{Q}}:=\mathbb{Q}\cap(0,\frac{1}{2})$, let $L_q$ be the straight line segment
joining $(0,0)$ and $(n,m)$ in $\mathbb{R}^2$. Then construct a finite word $c_q=s_0s_1\cdots s_{n}$
as follows:
\begin{equation}
s_i=\left\{ \begin{array}{cc}
1 & \textrm{ if $L_q$ intersects some line $y=k, k\in\mathbb{Z}$, for $x\in(i-1,i+1)$} \\
0 & \textrm{ otherwise }\\
\end{array}
\right.
\end{equation}
It follows that $c_q$ is palindromic and has the form:
\begin{equation}
c_q=10^{\mu_1}1^20^{\mu_2}1^2\cdots1^20^{\mu_{m-1}}1^20^{\mu_m}1.
\end{equation}
We will denote $P_q$ to the periodic orbits of period $n+2$ which have
 the codes $c_q{}_0^1$, when the distinction is not important and let 
 $\operatorname{NBT}=\{P_q:q\in\widehat{\mathbb{Q}}\}$. In \cite{Hall},
 Hall proved the following result.
 \begin{thm}
Let $q,q'\in\widehat{\mathbb{Q}}$. Then
\begin{itemize}
\item[(i)]  $P_q$ is quasi-one-dimensional, that is, $P_q\geqslant_1R \Longrightarrow P_q\geqslant_2 R$.
\item[(ii)] $q\leqslant q' \Longleftrightarrow (c_{q}{}^0_1)^\infty\geqslant_1 (c_{q'}{}^0_1)^\infty \Longleftrightarrow (c_{q}{}^0_1)^\infty\geqslant_2 (c_{q'}{}^0_1)^\infty $
\end{itemize}
 \end{thm}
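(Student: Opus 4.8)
The plan is to exploit the dictionary between one-dimensional unimodal dynamics and two-dimensional braid forcing. First I would record the easy half of this dictionary: for horseshoe orbits, $P\geqslant_2 R$ always implies $P\geqslant_1 R$. This is the standard fact that the braid-forcing order refines the unimodal order, obtained by projecting the horseshoe along its stable foliation onto its unimodal (tent-map) factor, under which two-dimensional forcing descends to one-dimensional kneading forcing. All the genuine work then lies in the reverse implication for the orbits $P_q$.

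For (i) I would argue in three steps. The central step is to show that the thick interval map $\theta_{P_q}$ has \emph{minimal periodic orbit structure}, i.e. every periodic braid type exhibited by $\theta_{P_q}$ is forced by $\bt(P_q)$. I would establish this by fattening the line diagram of $P_q$ to a train track and tightening the induced graph map by the Bestvina--Handel procedure; the technical claim, special to the Sturmian normal form $c_q=10^{\mu_1}1^2\cdots1^20^{\mu_m}1$, is that the resulting train-track map is already \emph{efficient}, carrying no back-tracking and admitting no invariant reducing curve family beyond the trivial one. Efficiency pins the Thurston representative of $\theta_{P_q}$ to pseudo-Anosov form with dilatation equal to the growth rate of the transition matrix and, crucially, forces its Nielsen classes to be exactly those demanded by $\bt(P_q)$, so that no spurious periodic braid type can appear. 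The second step is kneading-theoretic: an orbit $R$ satisfies $P_q\geqslant_1 R$ precisely when $R$ is realized as a periodic orbit of the unimodal interval map with kneading sequence $c_{P_q}$, and this interval map is by construction the core of $\theta_{P_q}$; hence every such $R$ occurs as a periodic orbit of $\theta_{P_q}$. Combining the two, $P_q\geqslant_1 R$ exhibits $R$ in $\theta_{P_q}$, whence minimal orbit structure gives $\bt(P_q)\geqslant_2\bt(R)$, i.e. $P_q\geqslant_2 R$; together with the easy direction this yields (i).

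For (ii) the first equivalence $q\leqslant q'\Leftrightarrow (c_q{}^0_1)^\infty\geqslant_1(c_{q'}{}^0_1)^\infty$ is the monotonicity of the cutting-sequence construction: as the slope $q$ of $L_q$ increases, its cutting sequence varies monotonically, and under the unimodal order this becomes the stated order-reversing dependence of the periodic codes. I would verify this directly from the palindromic normal form by locating the first position at which $c_q$ and $c_{q'}$ differ and reading off the inequality from the parity rule (O1)--(O2). The second equivalence is then immediate: $\geqslant_2\Rightarrow\geqslant_1$ is the easy direction recalled above, while $\geqslant_1\Rightarrow\geqslant_2$ between two NBT orbits is the special case $R=P_{q'}$ of part (i).

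The main obstacle is the efficiency/minimal-orbit-structure claim in the first step of (i): this is where the entire topological content resides, and it is exactly the property distinguishing the ``non-bogus transition'' codes from arbitrary horseshoe codes. Showing that the fattened train-track map never folds --- equivalently, that the transition-matrix growth rate equals the topological entropy and no reducing curve is created --- is the delicate part, whereas the kneading-theoretic realization and the combinatorial monotonicity are routine by comparison.
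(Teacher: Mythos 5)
Be aware that the paper does not prove this theorem: it is quoted verbatim from Hall's \emph{The creation of horseshoes} \cite{Hall}, so your proposal can only be compared against that source. Measured against it, your plan for part (i) is essentially the right one: fatten the line diagram of $P_q$ to a graph map, show that for the special palindromic codes $c_q=10^{\mu_1}1^2\cdots 1^20^{\mu_m}1$ the Bestvina--Handel tightening produces an efficient map with no folding and no reducing curves, conclude that $\theta_{P_q}$ is pseudo-Anosov with minimal periodic orbit structure, and combine this with the kneading-theoretic fact that the periodic orbits of $\theta_{P_q}$ are precisely the horseshoe orbits $R$ with $P_q\geqslant_1 R$. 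You also correctly locate the hard technical content in the efficiency claim. The first equivalence in (ii) is indeed the monotonicity of the cutting-sequence (height) function and is routine.

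There is, however, a genuine error in your ``easy half of the dictionary'': it is \emph{not} true that $P\geqslant_2 R$ implies $P\geqslant_1 R$ for arbitrary horseshoe orbits, and no projection onto the tent-map factor can make it true, because distinct horseshoe orbits can share a braid type while $\geqslant_1$ is a total order on codes. The cleanest counterexample is internal to this very theorem: for each $q$ the two orbits with codes $c_q0$ and $c_q1$ (e.g.\ $10010$ and $10011$ for $q=\tfrac13$) are distinct, are born together in a saddle-node bifurcation, and have the same braid type --- this is exactly why the paper writes $c_q{}^0_1$ and says the distinction is unimportant --- yet $(c_q1)^\infty>_1(c_q0)^\infty$, so the $\geqslant_1$-smaller one $\geqslant_2$-forces the $\geqslant_1$-larger one. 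This does not damage part (i), which asserts only $\geqslant_1\Rightarrow\geqslant_2$, but it invalidates your derivation of the implication $(c_q{}^0_1)^\infty\geqslant_2(c_{q'}{}^0_1)^\infty\Rightarrow(c_q{}^0_1)^\infty\geqslant_1(c_{q'}{}^0_1)^\infty$ in part (ii). The repair is to argue at the level of braid types: if $\bt(P_q)\geqslant_2\bt(P_{q'})$ but $q>q'$, then by the first equivalence and part (i) one also has $\bt(P_{q'})\geqslant_2\bt(P_q)$, whence $\bt(P_q)=\bt(P_{q'})$ by antisymmetry of $\geqslant_2$ (Theorem \ref{thm:boy}); one must then show that distinct NBT orbits have distinct braid types, for instance because $q$ is recovered as the rotation number about the fixed point, or because the pseudo-Anosov dilatation read off your train track is strictly monotone in $q$. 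That separation step is the missing ingredient, and it cannot be replaced by the stable-foliation projection you invoke.
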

So theorem above says that the Boyland order restricted to the NBT orbits is 
equal to the unimodal order.

\section{Forcing of renormalizable orbits}
For proving Theorem \ref{maintheorem} we need the following result:
\begin{thm}\label{theoforcing}
Let $P=A_1^0$ and $Q$  be periodic orbits.
Then
\begin{equation}
\Sigma_{P*Q}=\{R:P\geqslant_2 R\}\cup\{P*R:Q\geqslant_2R\}.
\end{equation}
\end{thm}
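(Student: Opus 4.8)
The plan is to show that both sides of the asserted identity coincide with the set $\operatorname{bt}(\phi)$ of periodic braid types realized by the Thurston--Nielsen canonical representative $\phi$ of $\operatorname{bt}(P*Q)$. Two standard properties of the forcing order do the bookkeeping. On the one hand $\phi$ is itself a homeomorphism exhibiting $\operatorname{bt}(P*Q)$, so by definition of forcing $\Sigma_{P*Q}\subseteq\operatorname{bt}(\phi)$. On the other hand every periodic orbit of $\phi$ lying in a pseudo-Anosov piece of its Thurston reduction is unremovable, hence forced, so such braid types lie in $\Sigma_{P*Q}$ (see \cite{Boy}). Thus the entire theorem reduces to identifying $\phi$ and computing $\operatorname{bt}(\phi)$.

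The geometric heart is the claim that $\phi$ is reducible with the renormalization structure announced in the introduction. Writing $c_P=Aa_{n-1}$ with $A=a_0\cdots a_{n-2}$ and $c_Q=b_0\cdots b_{m-1}$, the code $c_{P*Q}$ is built by threading the block $A$ with the symbols of $Q$; geometrically this should mean that $\phi$ has an invariant family of disjoint disks $D_0,\dots,D_{n-1}$, permuted as the orbit $P$, each carrying $m$ of the $nm$ points of $P*Q$, and whose first return map $\phi^{n}|_{D_0}$ realizes $\operatorname{bt}(Q)$. I would first prove that the canonical form is exactly the map obtained by inserting a representative $g_Q$ of $\operatorname{bt}(Q)$ with minimal orbit structure $\{R:Q\geqslant_2 R\}$ into the renormalization disk of the thick map $\theta_P$, the reducing curves being the $\partial D_i$. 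Because $P=A_1^0$ is an NBT orbit, Hall's theorem \cite{Hall} gives that $\theta_P$ is pseudo-Anosov with $\operatorname{bt}(\theta_P)=\{R:P\geqslant_2 R\}$, which is what makes the outer piece have no spurious orbits.

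Granting this, the computation of $\operatorname{bt}(\phi)$ is a clean dichotomy. Since $\{D_i\}$ is invariant, each periodic orbit $O$ of $\phi$ either avoids $\bigcup_i D_i$ or is trapped in it. If $O$ avoids the disks, then $\phi$ is isotopic to $\theta_P$ on $D^2\setminus O$ (they agree off the disks, which miss $O$), so $\operatorname{bt}(O,\phi)=\operatorname{bt}(O,\theta_P)\in\{R:P\geqslant_2 R\}$; here I use that the braid type of an orbit is intrinsic, referring only to $O$ and not to the other $nm-1$ points. If $O$ meets the disks, then it visits all $n$ of them cyclically and its first return to $D_0$ is a periodic orbit $R$ of $g_Q$, so $R\in\{R:Q\geqslant_2 R\}$; threading $R$ back through the $n$ disks produces a $\phi$-orbit whose code is, by a direct check from the definition of the renormalization operator in both parities of $\epsilon(A)$, exactly $c_{P*R}$. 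Hence $\operatorname{bt}(\phi)=\{R:P\geqslant_2 R\}\cup\{P*R:Q\geqslant_2 R\}$, and combining with the two facts of the first paragraph yields the theorem.

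The main obstacle is the middle step: showing that the Thurston--Nielsen canonical form of $\operatorname{bt}(P*Q)$ is precisely this reducible renormalization model, i.e. that the reducing curves are the $\partial D_i$, that the outer map is $\theta_P$ and the first return map realizes $\operatorname{bt}(Q)$, and that the insertion creates no periodic orbits beyond the two families above. This is exactly where the NBT hypothesis on $P$ enters, forcing the outer piece to be pseudo-Anosov with minimal orbit structure, and where the bookkeeping is heaviest. Since $Q$ may itself be renormalizable, the cleanest formulation is likely inductive on the renormalization depth, with the existence of a minimal representative $g_Q$ and the intrinsic invariance of an orbit's braid type isolated as the lemmas that legitimize both the collapsing and the trapping arguments.
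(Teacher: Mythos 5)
Your overall strategy coincides with the paper's: reduce the thick map $\theta_{P*Q}$ along $n$ disjoint curves bounding disks $D_0,\dots,D_{n-1}$ that are permuted with the combinatorics of $P$, note that the exterior dynamics reproduce $\theta_P$ while the first-return map to one disk reproduces $\theta_Q$ (flipped when $\epsilon(A)$ is odd), and then classify periodic orbits according to whether they meet $\bigcup_i D_i$. The unremovability facts you quote from Boyland are exactly what the paper uses implicitly when it passes from ``same Thurston representative'' to ``force the same orbits.'' So the route is the same; the issues are with what is left unproved and with a hypothesis you add.

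First, the step you defer as ``the main obstacle'' is precisely the content of the paper's Lemma \ref{lemma:5} and Lemma \ref{lemma:6}: one shows that the only shifts of $P*Q$ lying between $T_m=\sigma^n(P*Q)$ and $T_M=P*Q$ in the unimodal order are the $\sigma^{in}(P*Q)$, $0\le i\le m-1$ (this is what legitimizes the reducing curve $C_{n-1}$ and hence the invariant disk family), and that the cyclic order of the disks under $\theta_{P*Q}$ agrees with that of $P$; the other shows $\sigma^i(Q)>_1\sigma^j(Q)\iff\sigma^{in}(P*Q)>_1\sigma^{jn}(P*Q)$, which identifies the line diagram of the return map with that of $Q$. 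Without these unimodal-order computations the existence and combinatorics of the reducing curves is only asserted, so your argument is incomplete exactly where you say it is. Second, you assume $P$ is an NBT orbit in order to invoke Hall and make the outer piece pseudo-Anosov with minimal orbit structure. Theorem \ref{theoforcing} assumes only that $P=A{}^0_1$ is a periodic orbit; the NBT hypothesis enters only later, in Corollary \ref{corone}. The paper avoids your extra assumption by arguing that $\theta_P$ and $\theta_{P*Q}$ have the same combinatorics, hence the same Thurston representative, on the exterior of $\bigcup_i D_i$, and therefore force the same orbits there \emph{whatever} the Nielsen--Thurston type of that representative is. As written, your proof establishes only the special case needed for the corollary; to recover the stated theorem, replace ``the outer piece is pseudo-Anosov with minimal orbit structure'' by ``the outer piece has the same canonical form as $\theta_P$, hence realizes exactly $\Sigma_P$,'' and treat a possibly reducible outer piece recursively.
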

To prove the result above it will be needed two lemmas whose proofs are left to the 
reader.
\begin{lem}\label{lemma:5}
Let $i,j\in\{1,\cdots,n-1\}$ be positive integers with $i\neq j$ and $T_M=P*Q$
and $T_m=\sigma^n(T_M)$. Then
\begin{itemize}
\item[(a)] if $\epsilon(A)$ is even then $A0^\infty\leqslant_1 T_m\leqslant_1 T_M \leqslant_1 A1^\infty$,
\item[(b)] if $\epsilon(A)$ is odd then $A1^\infty\leqslant_1 T_m\leqslant_1 T_M \leqslant_1 A0^\infty$,
\item[(c)] $\sigma^i(P)\leqslant_1 \sigma^j(P) \Longleftrightarrow [\sigma^i(T_M)\leqslant_1 \sigma^j(T_M)
\textrm{ and } \sigma^i(T_m)\leqslant_1 \sigma^j(T_m)]$.
\end{itemize}
\end{lem}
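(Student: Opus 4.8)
The plan is to argue directly from the block description of $c_{P*Q}$ together with the parity rule (O1)--(O2), exploiting that every length-$n$ block of both $T_M$ and $T_m$ begins with the word $A$. I would write out the case $\epsilon(A)$ even; the case $\epsilon(A)$ odd is identical once one accounts for the two changes built into the definition of $*$: the parity accumulated over $A$ flips, which interchanges which of (O1)/(O2) applies at a block boundary, and the inner symbols are complemented, which interchanges the roles of $A0^\infty$ and $A1^\infty$. Two structural facts will be used throughout. First, maximality of $c_P$: comparing $c_P^\infty$ with $\sigma(c_P^\infty)$ forces $a_0=1$ and then $a_1=0$, so $A$ begins with $10$ and its first $0$ sits at the odd index $1$. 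Second, $T_M=c_{P*Q}$ is by construction the maximal representative of $P*Q$, so $\sigma^k(T_M)\leqslant_1 T_M$ for all $k$; taking $k=n$ gives the middle inequality $T_m\leqslant_1 T_M$ in (a) and (b) at no cost.

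For the outer bounds I would compare $T_M$ and $T_m$ with $A0^\infty$ and $A1^\infty$ letter by letter. All of these sequences share the prefix $A$, over which the accumulated parity is $\epsilon(A)$, so at index $n-1$ the first discrepancy is settled immediately by (O1)--(O2) unless the inner symbol equals the boundary symbol of $A0^\infty$ or $A1^\infty$; in that coincident situation the comparison slides into the next $A$-block, where the $10$-prefix property places the first genuine disagreement at an index of the correct parity and the inequality still points the right way. Carrying this out for each of the four inequalities and their $T_m$ analogues is routine parity bookkeeping.

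The substance of the lemma is (c). Assume $i<j$ and put $\delta=j-i$. Aligning $\sigma^i$ with $\sigma^j$, up to the slot where $\sigma^j$ first meets a connector symbol---the position holding $a_{n-1}$ in $P$, $b_0$ in $T_M$, and $b_1$ in $T_m$---the three pairs read identical letters of $A$ with identical accumulated parity, and in that slot $\sigma^i$ reads the letter $a_{n-1-\delta}$ in all three cases. This produces two cases. If the first disagreement between $\sigma^i(P)$ and $\sigma^j(P)$ falls strictly before this slot, it occurs at a pair of letters of $A$ shared verbatim by $T_M$ and $T_m$, so all three comparisons are decided the same way and the equivalence is immediate. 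If instead the decision is pushed to the connector slot, then $P$ weighs $a_{n-1-\delta}$ against $a_{n-1}$ while $T_M$ weighs it against $b_0$ and $T_m$ against $b_1$; should a connector coincide with $a_{n-1-\delta}$ the comparison spills into the following block, and here I would invoke (a) and (b): they trap every block-tail of $T_M$ and of $T_m$ between $A0^\infty$ and $A1^\infty$, so the sign of the spilled comparison is governed by $\epsilon(A)$, and feeding the two connector values $b_0$ and $b_1$ through this squeeze is exactly what lets the conjunction of the $T_M$- and $T_m$-inequalities reproduce the single $P$-comparison.

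I expect this boundary sub-case of (c) to be the main obstacle, since the connector slot is precisely where renormalization departs from $P$: one must track simultaneously the accumulated parity at that slot and the four letters $a_{n-1-\delta}$, $a_{n-1}$, $b_0$, $b_1$, and then use (a)/(b) to resolve the continuations. I would organize this as a compact parity-versus-value table rather than chasing each continuation individually, which should keep the bookkeeping finite and transparent.
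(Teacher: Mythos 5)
The paper offers no proof of this lemma to compare against: it states explicitly that the proofs of Lemmas~\ref{lemma:5} and~\ref{lemma:6} are left to the reader. So your attempt has to be judged on its own. Your treatment of (a) and (b) is essentially right: the letter-by-letter comparison with $A0^\infty$ and $A1^\infty$, using the parity $\epsilon(A)$ at the connector slot and the $10$-prefix of $A$ when the comparison slides into the next block, does close those four inequalities (modulo the degenerate cases $n\le 2$, where $a_1$ does not exist inside $A$ and the bounds must be checked separately). One caveat: for the middle inequality $T_m\leqslant_1 T_M$ you invoke the maximality of $c_{P*Q}$. The paper never proves that the displayed word is the maximal representative --- that is a theorem about the $*$-product, not a definition, and its standard proof uses exactly the comparisons of this lemma, so you are skirting circularity. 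It is cleaner to get $T_m\leqslant_1 T_M$ from the maximality of $c_Q$ (i.e.\ $\sigma(Q)\leqslant_1 Q$) via the block comparison, which is precisely Lemma~\ref{lemma:6} with $(i,j)=(1,0)$.

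The gap is in the spillover sub-case of (c). When the connector read by $\sigma^j$ coincides with $a_{n-1-\delta}$, the comparison continues, and from that point on you are comparing the block-aligned tail $\sigma^n(X)$ against the \emph{misaligned} tail $\sigma^{n-\delta}(X)$. Parts (a) and (b) trap the aligned tail in the interval $[A0^\infty,A1^\infty]$, but they say nothing about $\sigma^{n-\delta}(X)$, and locating that misaligned tail relative to the interval again runs through letters of $A$ until it hits the \emph{next} connector slot --- now the one belonging to $\sigma^i$ --- where the same coincidence can recur. So the comparison can spill through several blocks, alternating which of the two shifts is reading its connector, and its eventual sign is \emph{not} ``governed by $\epsilon(A)$'' in one step: it depends on where inside a later block the first genuine disagreement lands, hence on the internal structure of $A$ and on subsequent connector values. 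A correct proof needs an induction on the number of connector slots traversed before the first disagreement, with the observation that each undecided crossing contributes the same parity increment $\epsilon(A)+u$ to both sides; your ``compact parity-versus-value table'' can only serve as the base case of that induction, not as a substitute for it.

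One structural point you gesture at but should make explicit, since it is what makes the conjunction over $T_M$ and $T_m$ do its job: because $c_Q$ is maximal (and $m\ge 2$), $b_0=1$ and $b_1=0$, so the two connector values $\beta_0,\beta_1$ read at the critical slot are always $\{0,1\}$. Hence exactly one of $T_M$, $T_m$ is decided at that slot by the same symbol comparison and parity, and at most one spills. The whole burden of the proof is then to show that the spilled comparison cannot contradict the slot-decided one when $P$'s own comparison is decided there (forward direction), and cannot rescue the conjunction when $P$'s comparison fails (backward direction). That is exactly the part your sketch leaves unproved.
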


\begin{lem}\label{lemma:6}
Let $P=A{}_1^0$ and $Q$ be two periodic orbits. If $i,j\in\{0,\cdots,m-1\}$, with $i\neq j$, then
 $$\sigma^i(Q)>_1\sigma^j(Q) \Longleftrightarrow \sigma^{in}(P*Q)>_1\sigma^{jn}(P*Q).$$
\end{lem}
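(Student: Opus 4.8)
The plan is to exploit the block structure of the renormalized code and reduce the comparison of the two rotations of $P*Q$ to the comparison of the corresponding rotations of $Q$, carefully tracking how the parity rule of the unimodal order is transformed. First I would record the shape of the shifted sequences. Writing $c_{P*Q}=A c_0\,A c_1\cdots A c_{m-1}$, where $c_k=b_k$ if $\epsilon(A)$ is even and $c_k=\overline{b_k}$ if $\epsilon(A)$ is odd, the sequence $\sigma^{in}(P*Q)$ is the cyclic rotation $(A c_i\,A c_{i+1}\cdots)^\infty$ that begins at the $i$-th block (all indices read modulo $m$), and likewise for $\sigma^{jn}(P*Q)$. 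Since the $m$ cyclic rotations of $Q$ are pairwise distinct and $i\neq j$, the sequences $\sigma^i(Q)$ and $\sigma^j(Q)$ differ; let $\ell$ be the least index with $b_{i+\ell}\neq b_{j+\ell}$, so that $b_{i+t}=b_{j+t}$ for every $t<\ell$.

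Next I would locate the first disagreement of the two large sequences. Because each $c_k$ is obtained from $b_k$ by the same (uniform) rule, the equalities $b_{i+t}=b_{j+t}$ force $c_{i+t}=c_{j+t}$, so the blocks $A c_{i+t}$ and $A c_{j+t}$ coincide for $t<\ell$; in block $\ell$ the prefix $A$ is common while the decorating bits $c_{i+\ell}$ and $c_{j+\ell}$ differ. Hence $\sigma^{in}(P*Q)$ and $\sigma^{jn}(P*Q)$ first differ exactly at the bit position $(\ell+1)n-1$. I would then compute the parity of the sum over the common prefix: the $\ell$ completed blocks contribute $\sum_{t<\ell}(\epsilon(A)+c_{i+t})$ and the leading $A$ of block $\ell$ contributes $\epsilon(A)$, for a total having the parity of $(\ell+1)\epsilon(A)+\sum_{t<\ell}c_{i+t}$. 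Rules (O1)--(O2) then characterize $\sigma^{in}(P*Q)>_1\sigma^{jn}(P*Q)$ purely in terms of this parity together with the comparison of $c_{i+\ell}$ and $c_{j+\ell}$, whereas $\sigma^i(Q)>_1\sigma^j(Q)$ is governed by the parity of $S_b:=\sum_{t<\ell}b_{i+t}$ together with the comparison of $b_{i+\ell}$ and $b_{j+\ell}$.

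Finally I would verify the equivalence by splitting on the parity of $\epsilon(A)$. When $\epsilon(A)$ is even we have $c_k=b_k$, so the two partial-sum parities agree and the bit comparisons coincide, making the two conditions literally identical. When $\epsilon(A)$ is odd, $c_k=\overline{b_k}$ produces two simultaneous reversals: the decisive comparison flips, since $c_{i+\ell}>c_{j+\ell}\iff b_{i+\ell}<b_{j+\ell}$, and, using $\sum_{t<\ell}c_{i+t}=\ell-S_b$ and $(\ell+1)\epsilon(A)\equiv \ell+1$, the governing parity becomes $2\ell+1-S_b\equiv 1+S_b$, i.e.\ the opposite of $S_b$. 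These two reversals cancel, so (O1) for the large sequences matches (O2) for $Q$ and conversely, giving the same equivalence in both directions. I expect the only delicate point to be this bookkeeping in the odd case, namely confirming that the parity flip coming from replacing each $b_k$ by $\overline{b_k}$ in the accumulated prefix exactly compensates the reversal of the decisive bit comparison; the remaining steps are a direct reading of the block structure.
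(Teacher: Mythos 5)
Your argument is correct: the identification of the first disagreement at position $(\ell+1)n-1$, the prefix-sum parity $(\ell+1)\epsilon(A)+\sum_{t<\ell}c_{i+t}$, and the cancellation of the two reversals in the odd case all check out against the paper's definition of $\geqslant_1$ and of the $*$-product. The paper explicitly leaves the proof of this lemma to the reader, and your block-by-block parity bookkeeping is precisely the intended (omitted) computation, so there is nothing to contrast.
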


\begin{proof}[Proof of Theorem \ref{theoforcing}]
Let $\theta_{P*Q}$ be the thick map induced by $P*Q$. 
First we see that the only iterates of $P*Q$ satisfying 
$T_m\leqslant_1 \sigma^i(P*Q)\leqslant_1 T_M$ are
the iterates $\sigma^{in}(P*Q)$, with $0\le i\le m-1$; so there exists a 
curve $C_{n-1}$ containing these orbits disjoint from the others and bounding a 
region $D_{n-1}$.
 By Lemma \ref{lemma:5}(c) and noting that $\sigma^i(T_m)$ and
 $\sigma^i(T_M)$ has the same initial symbol for $i\in\{1,\cdots,n-1\}$, 
 it follows that $\{\theta_{P*Q}^i(C_{n-1})\}_{i=1}^{n-1}$ has the same combinatorics as $P$. 
 For $i=0,\cdots,n-2$, let $C_i=\theta_{P*Q}^{i+1}(C_{n-1})$ be a curve 
 which bounds a domain $D_i$. It is possible to define $\theta_{P*Q}$ 
 such that  $\theta_{P*Q}^n(C_{n-1})=C_{n-1}$.  Then the line diagram 
 of $\{D_0,\cdots, D_{n-1}\}$  is as the line
 diagram of $P$ and then $\theta_{P*Q}$ has the same behaviour than 
 $\theta_P$ in the exterior of $\cup D_i$. Since $\theta_{P*Q}$ can be reduced
  by a family of curves, we will need study the Thurston representative 
of $\theta_{P*Q}$ restricted to $D^2\setminus\cup C_i$. As $\theta_P$ and $\theta_{P*Q}$
 have the same  combinatorics in the exterior of $\cup D_i$, they have the same
  Thurston representative in the exterior of $\cup D_i$. So $P$ and $P*Q$ 
  force the same periodic orbits in the exterior of $\cup D_i$.
  Then $\{R:P\geqslant_2 R\}\subset\Sigma_{P*Q}$.
  See Fig. \ref{figorbitmaximal}.
\begin{figure}[h]
\centering
\includegraphics[width=85mm,height=16mm]{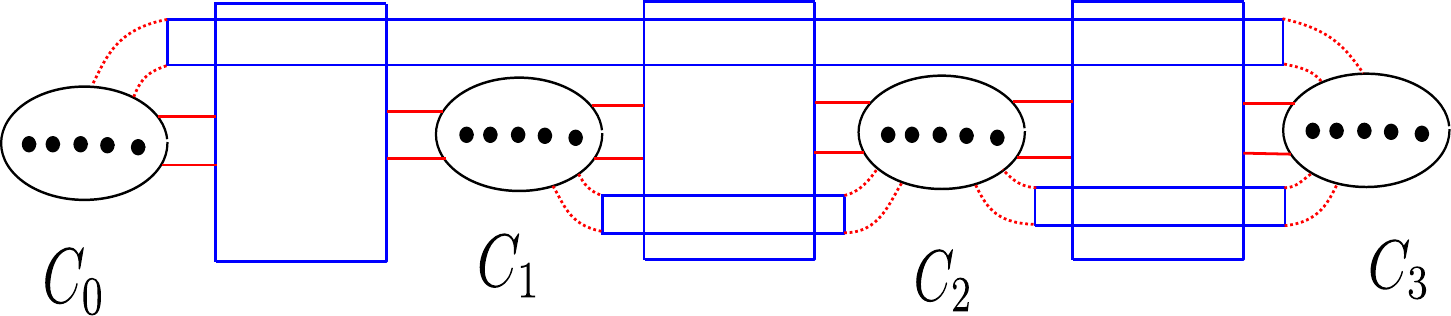}
\caption{The image $\theta_{P*Q}(C_i)$ when  $P=1001$.}
\label{figorbitmaximal}
\end{figure}

It is clear that to find what orbits are forced by $P*Q$ in $\cup D_i$, it is 
enough to study $\theta_{P*Q}^n$ restricted to $D_{n-1}$. By Lemma \ref{lemma:6},
the line diagram of $\theta_{P*Q}^n$ inside $D_{n-1}$ is the same as the line diagram
of $Q$ when $\epsilon(A)$ is even, and it is flipped when $\epsilon(A)$ is odd.
See Fig. \ref{fig:Qdiagramline}.  So $\theta_{P*Q}^n$ has the same combinatorics
than $\theta_Q$.
\begin{figure}[h]
\centering
\subfigure[When $\epsilon(A)$ is even and $Q=10010$.]{
\includegraphics[width=70mm,height=35mm]{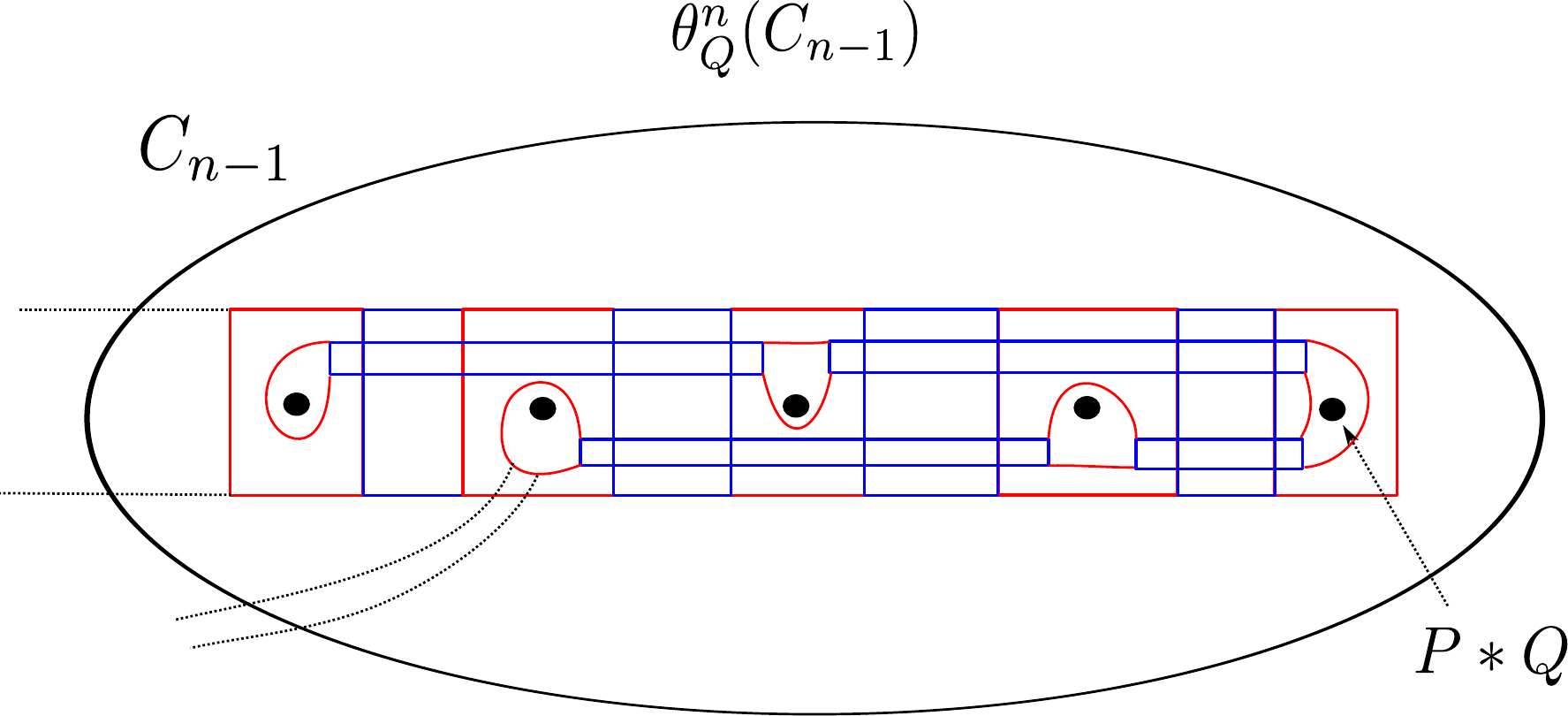}
\label{subfig2a}
}
\hspace{8pt}
\subfigure[When $\epsilon(A)$ is odd and $Q=10010$.]{
\includegraphics[width=70mm,height=35mm]{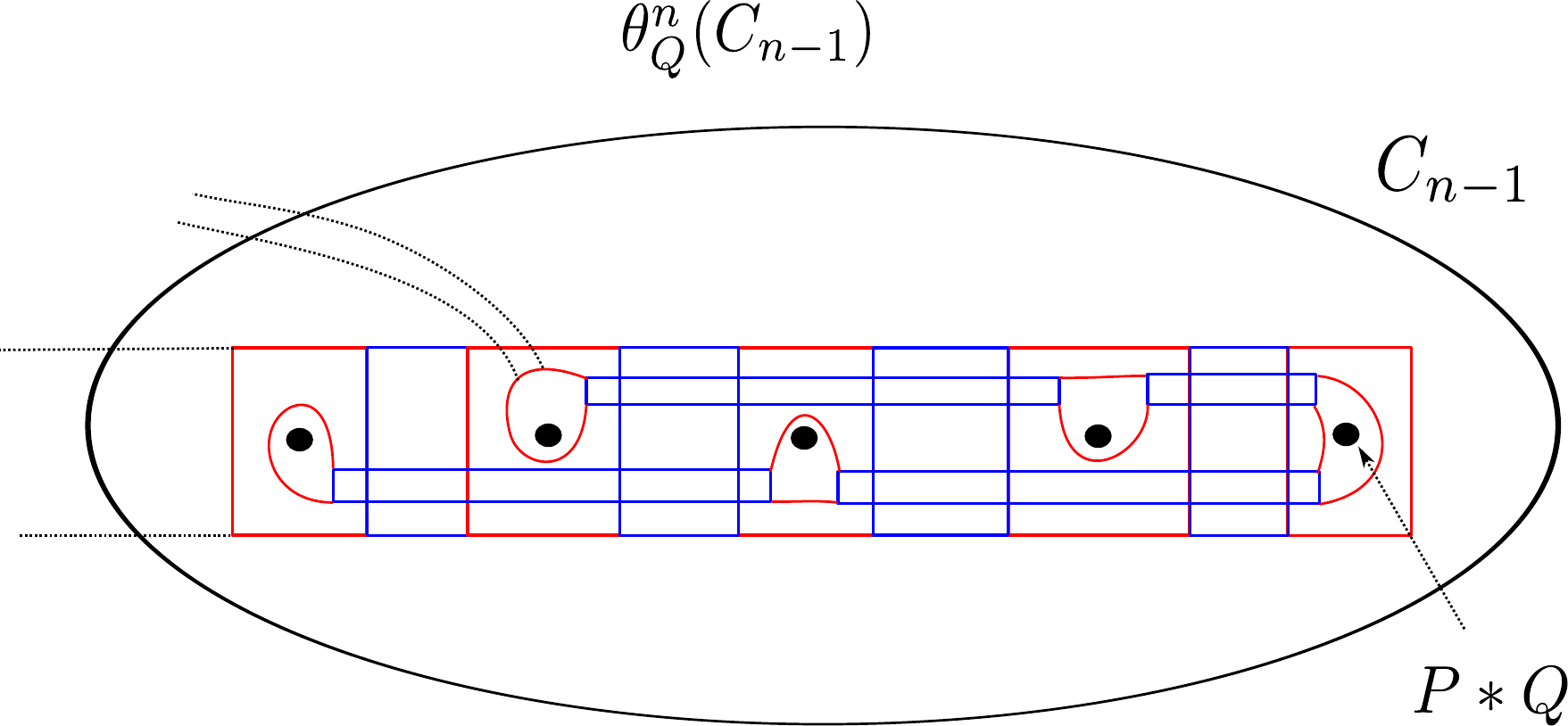}
\label{subfig2b}
}
\label{fig:Qdiagramline}
\caption{The image $\theta_{P*Q}^{n}(C_{n-1})$.}\label{fig:maximalorbita}
\end{figure}

As in Lemma \ref{lemma:6}, we can prove that 
$R\leqslant_2 Q \Longleftrightarrow P*R\leqslant_2 P*Q.$
So $\Sigma_{P*Q}=\{R:P\geqslant_2 R\}\cup\{P*R:Q\geqslant_2R\}$.
\end{proof}

\begin{rmk}
Theorem \ref{theoforcing} says us that to look for the orbits that are forced
by $P*Q$ it is enough to look for the orbits that are forced by $P$ and the
orbits that are forced by $Q$. So we can study the thick maps induced
by $P$ and $Q$ separately. Every of these thick maps can be reduced using 
methods to determine its minimal representative, e.g. \cite{BesHan, SolNat}.
\end{rmk}

\begin{cor}\label{corone}
Let $P_1,P_2,\cdots,P_k$ be NBT orbits. Then
\begin{itemize}
\item[(a)] $\Sigma_{P_1*\cdots*P_k}=\cup_{j=1}^k \{P_1*\cdots*P_{j-1}*R:P_j\geqslant_1 R\}$
\item[(b)] $\Sigma_{P_1*\cdots*P_k}=\{R:P_1*\cdots*P_k\geqslant_1R\}$
\end{itemize}
\end{cor}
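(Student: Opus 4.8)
The plan is to establish (a) and (b) together by induction on $k$, writing $S_j=P_1*\cdots*P_j$ so that $S_k=S_{k-1}*P_k$ by the associativity convention. The base case $k=1$ is Hall's theorem quoted above: an NBT orbit is quasi-one-dimensional, and together with the general principle that forcing refines the unimodal order (so $P\geqslant_2R\Rightarrow P\geqslant_1R$ for horseshoe orbits) this gives $\Sigma_{P_1}=\{R:P_1\geqslant_1R\}$, which is (a) and (b) for $k=1$. Before running the step I would check that the hypothesis of Theorem \ref{theoforcing} propagates: since each NBT factor has a code of the form $c_{q}{}_0^1$ and the renormalization formula simply appends the final symbol of the second factor to the repeated block $A$, the star product of two orbits written in the form $A{}_1^0$ is again of this form. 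Hence $S_{k-1}$ is a legitimate choice of $P=A{}_1^0$ in Theorem \ref{theoforcing}.

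For (a), apply Theorem \ref{theoforcing} with $P=S_{k-1}$ and $Q=P_k$:
\[
\Sigma_{S_k}=\{R:S_{k-1}\geqslant_2R\}\cup\{S_{k-1}*R:P_k\geqslant_2R\}.
\]
The first set is $\Sigma_{S_{k-1}}$, which by the inductive hypothesis (a) equals $\bigcup_{j=1}^{k-1}\{P_1*\cdots*P_{j-1}*R:P_j\geqslant_1R\}$. For the second set, $P_k$ is NBT, so $P_k\geqslant_2R\Leftrightarrow P_k\geqslant_1R$ (Hall's theorem gives the implication $\Leftarrow$ and the general principle the implication $\Rightarrow$), and the set becomes $\{P_1*\cdots*P_{k-1}*R:P_k\geqslant_1R\}$. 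The union is exactly $\bigcup_{j=1}^{k}\{P_1*\cdots*P_{j-1}*R:P_j\geqslant_1R\}$, proving (a).

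Statement (b) is equivalent to $S_k$ being quasi-one-dimensional, i.e. $\Sigma_{S_k}=\{R:S_k\geqslant_1R\}$. Feeding the inductive hypothesis (b) for $S_{k-1}$ (so $\{R:S_{k-1}\geqslant_2R\}=\{R:S_{k-1}\geqslant_1R\}$) into the display above reduces (b) to the purely one-dimensional identity
\[
\{R:S_k\geqslant_1R\}=\{R:S_{k-1}\geqslant_1R\}\cup\{S_{k-1}*R:P_k\geqslant_1R\},
\]
which I would prove from Lemmas \ref{lemma:5} and \ref{lemma:6}. For the inclusion $\supseteq$, Lemma \ref{lemma:5}(a)--(b) yield $c_{S_{k-1}}\leqslant_1 T_M=c_{S_k}$, so $S_{k-1}\geqslant_1R$ forces $S_k\geqslant_1R$; and the orbit-level form of Lemma \ref{lemma:6}, namely $P_k\geqslant_1R'\Leftrightarrow S_{k-1}*P_k\geqslant_1S_{k-1}*R'$, gives the second family. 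For $\subseteq$, take $R$ with every shift $\leqslant_1 T_M$: if moreover every shift is $\leqslant_1 c_{S_{k-1}}$ then $R$ lies in the first set, while otherwise some shift of $R$ enters the window $[T_m,T_M]$ isolated in Lemma \ref{lemma:5}(a)--(b), and I would argue that this forces the itinerary of $R$ to split into the renormalization blocks $A\,b$, so that $R=S_{k-1}*R'$ with $P_k\geqslant_1R'$ by Lemma \ref{lemma:6}.

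The hard part will be this last classification: proving that an orbit whose itinerary climbs above $c_{S_{k-1}}$ yet never exceeds $T_M$ must be a renormalized orbit $S_{k-1}*R'$. This is the symbolic shadow of the geometric reducibility exploited in the proof of Theorem \ref{theoforcing}, where the curve $C_{n-1}$ and the first return $\theta_{P*Q}^n$ on $D_{n-1}$ carve out the renormalized dynamics; making it rigorous requires a block-synchronization argument on the sequences, with the roles of $0$ and $1$ exchanged according to the parity of $\epsilon(A)$ as dictated by Lemma \ref{lemma:5}. Everything else is bookkeeping between the two orders.
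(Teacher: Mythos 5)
Your proposal is correct and takes essentially the same route as the paper: item (a) by iterating Theorem \ref{theoforcing}, and item (b) by reducing to the claim that the star product of quasi-one-dimensional orbits is quasi-one-dimensional, which in turn hinges on classifying the orbits lying between $S_{k-1}$ and $S_k$ in the unimodal order. The ``hard part'' you flag --- that such an orbit's itinerary must synchronize into blocks beginning with $A$, so that $S=S_{k-1}*R'$ with $P_k\geqslant_1 R'$ --- is exactly the step the paper carries out, using Lemma \ref{lemma:5}(a) to force $c_S=As_{n-1}s_n\cdots$ and then shifting by $n$ and repeating, so your sketch matches the paper's argument.
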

\begin{proof}
Item (a) follows directly from Theorem \ref{theoforcing}. For item (b) it is enough
to prove that if $P$ and $Q$ are quasi-one-dimensional horseshoe orbits then 
$P*Q$ is a quasi-one-dimensional orbit too. Suppose that $\epsilon(A)$ is even. 
From Theorem \ref{theoforcing},
\begin{equation}
\Sigma_{P*Q}=\{R:P\geqslant_1 R\}\cup\{P*R:Q\geqslant_1R\}.
\end{equation}
Hence it follows that $\Sigma_{P*Q}\subset\{S:P*Q\geqslant_1 S\}$.
We have to prove the inclusion $\{S:P*Q\geqslant_1 S\}\subset\Sigma_{P*Q}$.
If $P\geqslant_1 S$ then $S\in\Sigma_{P*Q}$. 
Let $S$ with $c_S=s_0s_1\cdots s_{k-1}$
be a periodic orbit with $P\leqslant_1S\leqslant_1 P*Q$. 
By Lemma \ref{lemma:5}(a), $(A0){}^\infty \leqslant_1 c_S^\infty\leqslant_1 (A1){}^\infty$.
This implies that $c_S=As_{n-1}s_n\cdots$ and
$$(0A)^\infty \leqslant_1\sigma^{n-1}(c_S^\infty)=s_{n-1}s_n\cdots \leqslant_1(1A)^\infty,$$
and $\sigma^n(c_S{}^\infty)\geqslant_1(A0)^\infty$. In the other hand 
$\sigma^{n}(c_S{}^\infty)\leqslant_1 c_{P*Q}{}^\infty$.
Then $\sigma^n(c_S{}^\infty)=As_{2n-1}\cdots$ and then 
$c_S=As_nAs_{2n-1}\cdots$.  Continuing this process, it follows that
$S=P*R$ where $c_R=s_{n-1}s_{2n-1}\cdots$. So $P*R\leqslant_1 P*Q$ 
which implies that $R\leqslant_1 Q$. So $S\in\{P*R:Q\geqslant_1R\}$ and the 
proof is finished.

\end{proof}


Now we proceed to prove Theorem \ref{maintheorem}.
\begin{proof}[Proof of Theorem \ref{maintheorem}]
Let $\{P_j\}_{j\in\mathbb{N}}$ be the set of NBT orbits and consider the
space $\mathbb{N}^\mathbb{N}$ of sequences of positive integers. 
Take a sequence $\mathcal{J}=(j_1,j_2,\cdots,j_n,\cdots)\in\mathbb{N}^\mathbb{N}$
and define 
\begin{equation}
\mathcal{R}_\mathcal{J}=\cup_{k=1}^\infty\{P_{j_1}*\cdots*P_{j_k}\}
\end{equation}
and $\mathcal{R}=\cup_{\mathcal{J}\in\mathbb{N}^\mathbb{N}}\mathcal{R}_{\mathcal{J}}$.
By Corollary \ref{corone}(b), every orbit of $\mathcal{R}$ is quasi-one-dimensional.
\end{proof}

 \begin{example}
By previous Theorem, 
if $S=10010*10010=1001\mathbf{1}1001\mathbf{0}1001\mathbf{0}1001\mathbf{1}1001\mathbf{0}$, 
\begin{equation}
\Sigma_S=\{R:10010\geqslant_1R\}\cup\{10010*R:10010\geqslant_1R\}.
\end{equation}
\end{example}





\section{acknowledgements}
I would like to thank to Toby Hall and Dylene de Barros for their comments which contributed to 
improve this paper.

\end{document}